\documentclass[a4paper,11pt,leqno]{amsart}
%  \textwidth=6in
%\usepackage[latin1]{inputenc}
%\usepackage[total={6in,9in},
%top=1in, left=1in, right=1in, bottom=1in]{geometry}
\usepackage{a4wide}
\usepackage{amsmath}
\usepackage[italian, english]{babel}
\usepackage{amsfonts}
\usepackage{amssymb}
\usepackage{dsfont}
\usepackage{mathrsfs}
\usepackage{graphicx}
\usepackage{fancyhdr}
\usepackage{amsthm}
\usepackage{empheq}
\usepackage{cases}
\usepackage[all]{xy}
\usepackage{stmaryrd}
\usepackage[colorlinks, citecolor=blue, linkcolor=red]{hyperref}
\usepackage{mathrsfs}
%
%

%\pagestyle{fancy}
%% i comandi seguenti impediscono la scrittura in maiuscolo
%% dei nomi dei capitoli e dei paragrafi nelle intestazioni
%% \renewcommand{\chaptermark}[1]{\markboth{#1}{}}
%\renewcommand{\sectionmark}[1]{\markright{\thesection\ #1}}
%\fancyhf{} % rimuove l'attuale contenuto dell'intestazione
%% e del pi\`e di pagina
%\fancyhead[LE,RO]{\bfseries\thepage}
%\fancyhead[LO]{\bfseries\rightmark}
%\fancyhead[RE]{\bfseries\leftmark}
%\renewcommand{\headrulewidth}{0.5pt}
%\renewcommand{\footrulewidth}{0pt}
%\addtolength{\headheight}{0.5pt} % riserva spazio per la linea
%\fancypagestyle{plain}{%
%\fancyhead{} % ignora, nello stile plain, le intestazioni
%\renewcommand{\headrulewidth}{0pt} % e la linea
%}

 %\fancypagestyle{plain}{\fancyhf{}\renewcommand{\headrulewidth}{0pt}}
%    \pagestyle{fancy}
%    \fancyhf{}
%    \fancyhead[LO]{\scshape \rightmark}
%    \fancyhead[RE]{\scshape \leftmark}
%    \fancyhead[LE,RO]{\thepage}
%    \renewcommand{\chaptermark}[1]{\markboth{#1}{}}
%    \renewcommand{\sectionmark}[1]{\markright{\thesection.\ #1}{}}

\def\SS{{{\mathbb S}}}

\def\RR{{\mathbb R}}

\newcommand{\erre}{\mathds{R}}

\newcommand{\ricc}{\operatorname{Ric}}

%\newcommand{\Longlra}{\Longleftrightarrow}

               % tra parentesi graffe
\newcommand{\pa}[1]{{\left(#1\right)}}                  % tra tonde
\newcommand{\sq}[1]{{\left[#1\right]}}                  % tra quadre
\newcommand{\abs}[1]{{\left|#1\right|}}                 % valore assoluto
              % norma
      % pairing

                           % epsilon
                                  % tale che
  % sistemi di equazioni

                          % metrica
                 % varietà riemanniana
                      % varietà riemanniana con g

\renewcommand{\tilde}[1]{\widetilde{#1}}

% matrici (si usa & per separe gli elementi sulla stessa riga e \\ per cambiare riga)

    % matrice senza parentesi
 % matrice parentesi tonde
 % matrice parentesi quadre
 % matrice parentesi graffe
 % matrice con barra (determinante)
 % matrice con doppia barra

% per il coefficiente binomiale si usa il comando \binom{}{}, che si adatta al testo
% per le frazioni esistono \dfrac e \tfrac

\newtheorem{theorem}{\textbf{Theorem}}[section]
\newtheorem{lemma}[theorem]{\textbf{Lemma}}
\newtheorem{proposition}[theorem]{\textbf{Proposition}}

\theoremstyle{remark}
\newtheorem{rem}[theorem]{\textbf{Remark}}

\numberwithin{equation}{section}
%
%
%\theoremstyle{definition}
%\newtheorem{defi}[teorema]{Definition}
%\newtheorem*{oss}{Remark}
%
%\theoremstyle{remark}
%\newtheorem{esemp}[teorema]{Example}
%\newtheorem*{nota}{Nota}

%\onehalfspacing

\title[GRS with vanishing conditions on Weyl]
{Gradient Ricci solitons \\ with vanishing conditions on Weyl}

\date{\today} 
\linespread{1.2}

\keywords{Ricci solitons, rigidity results, Weyl tensor}

\subjclass[2010]{53C20, 53C25.}

\begin{document}
%\title{On fourth order conformally invariant differential operators on Riemannian manifolds}

\maketitle

\date{\today}

\begin{center}
\textsc{\textmd{G. Catino\footnote{Politecnico di Milano, Italy.
Email: giovanni.catino@polimi.it.}, P.
Mastrolia\footnote{Universit\`{a} degli Studi di Milano, Italy.
Email: paolo.mastrolia@gmail.com.}, D. D. Monticelli\footnote{Politecnico di Milano, Italy. Email: dario.monticelli@polimi.it. \\The three authors are supported
by GNAMPA project ``Analisi Globale, PDE's e Strutture Solitoniche''.}}}
\end{center}

\begin{abstract}
We classify complete gradient Ricci solitons satisfying a fourth-order vanishing condition on the Weyl tensor, improving previously known results. More precisely, we show that any $n$-dimensional ($n\geq 4$) gradient shrinking Ricci soliton with fourth order divergence-free Weyl tensor is either Einstein, or a finite quotient of $N^{n-k}\times \mathbb{R}^k$, $(k > 0)$, the product of a Einstein manifold $N^{n-k}$ with the Gaussian shrinking soliton $\mathbb{R}^k$. The technique applies also to the steady and expanding cases in all dimensions. In particular, we prove that a three dimensional gradient steady soliton with third order divergence-free Cotton tensor, i.e. with vanishing double divergence of the Bach tensor, is either flat or isometric to the Bryant soliton. 
\end{abstract}

%\tableofcontents

\section{Introduction}

A Riemannian manifold $(M^n ,g)$ of dimension $n\geq 3$ is a {\em gradient Ricci soliton} if there exists a smooth function $f$ on $M$ such that
$$
\ricc + \nabla^2 f \, = \, \lambda \, g
$$
for some constant $\lambda$. The Ricci soliton is called {\em shrinking} if $\lambda>0$, {\em steady} if $\lambda=0$ and {\em expanding} if $\lambda<0$. When the potential function $f$ is a constant a gradient Ricci soliton is simply an Einstein manifold. Ricci solitons generate self-similar solutions of the Ricci flow, play a fundamental role in the formation of singularities of the flow and have been studied by several authors (see H.-D. Cao~\cite{cao1, cao2} for nice overviews). As it is clear from the definition, the Ricci solitons equation can be interpreted as a prescribing condition on the Ricci tensor of $g$, that is on the {\em trace part} of the Riemann tensor (see for instance the interesting paper \cite{bou}). Thus, we can expect classification results for these structures only assuming further conditions on the {\em traceless part} of the Riemann tensor, i.e. on the {\em Weyl tensor} $W$, if $n\geq 4$.

Three dimensional complete gradient {\em shrinking} Ricci solitons are classified and indeed it is well known that they are finite quotients of either the round sphere $\SS^3$, or the Gaussian shrinking soliton $\RR^3$, or the round cylinder $\SS^2\times \RR$ (see Ivey~\cite{ivey1} for the compact case and Perelman~\cite{per1}, Ni--Wallach~\cite{niwal} and H.-D.Cao--B.-L.Chen--X.-P.Zhu~\cite{caochenzhu} for the complete case). 

In higher dimensions, classification results for Ricci shrinkers have been obtained by several authors under curvature conditions on the Weyl tensor. Z.-H.Zhang \cite{zhang}, based on the work of Ni-Wallach \cite{niwal}, showed that complete locally conformally flat gradient shrinking Ricci solitons, i.e. with
$$
W_{ikjl}=0\,,
$$ 
are isometric to finite quotients of either $\SS^n$, $\RR^n$, or $\SS^{n-1}\times\RR$ (see also the works of Eminenti--La Nave--Mantegazza \cite{emilanman}, Petersen--Wylie \cite{petwyl}, X.Cao--B.Wang--Z.Zhang \cite{caowanzha}). Other rigidity results have been obtained under suitable pointwise or integral pinching conditions on the Weyl tensor by Catino \cite{cat1, cat2} and X.Cao--Tran \cite{caotra}. In dimension four, X.Chen-Y.Wang \cite{chewan} (see also H.-D.Cao-Q.Chen \cite{caoche2}) proved that half conformally flat (i.e. $W^{\pm}=0$) gradient shrinking Ricci solitons are finite quotients of $\SS^4$, $\mathbb{CP}^2$, $\RR^4$, or $\SS^3\times\RR$. 

Under the weaker condition of harmonic Weyl tensor, i.e. 
$$
\operatorname{div}(W)=\nabla_l W_{ikjl}=0\,,
$$
Fernandez-Lopez--Garcia-Rio \cite{fergar} and Munteanu--Sesum \cite{munses} proved that $n$-dimensional complete gradient shrinking solitons are either Einstein, or finite quotients of $N^{n-k}\times \mathbb{R}^k$, $(k > 0)$, the product of a Einstein manifold $N^{n-k}$ with the Gaussian shrinking soliton $\mathbb{R}^k$. In the case $n=4$, a stronger result have been obtained by J.-Y.Wu--P.Wu--Wylie \cite{wuwuwyl}, assuming the Weyl tensor is half harmonic (i.e. $\operatorname{div}(W^{\pm})=0$).

It is interesting to observe that the aforementioned results can be interpreted as rigidity results under {\em zero and first order} vanishing conditions on the Weyl tensor. 

Recently, H.-D.Cao--Q.Chen \cite{caoche2} showed that Bach-flat gradient shrinking Ricci solitons, i.e. with
$$
B_{ij} =  \frac{1}{n-3}\nabla_k \nabla_l W_{ikjl} + \frac{1}{n-2}R_{kl}W_{ikjl}=0 \,,
$$
are either Einstein, or finite quotients of $\RR^n$ or $N^{n-1}\times\RR$, where $N^{n-1}$ is an $(n-1)$-dimensional Einstein manifold. In the same spirit as before, this can be seen as a vanishing condition involving {\em second and zero order} terms in Weyl, which a posteriori captures a more rigid class of solitons than in the harmonic Weyl case.

Gradient {\em steady} Ricci solitons are less rigid, but many results in the same spirit have been obtained. It is well-known that compact gradient steady solitons must be Ricci flat. In dimension $n=2$, the only gradient steady Ricci soliton with positive curvature is the Hamilton's cigar $\Sigma^2$, see Hamilton \cite{ham1}. In dimension three, the classification of complete gradient steady Ricci solitons is still open. Known examples are given by quotients of $\RR^{3}$, $\Sigma^{2}\times\RR$ and the rotationally symmetric one constructed by Bryant \cite{bry}. In the paper by Brendle \cite{bre}, it was shown that the Bryant soliton is the only nonflat, $k$-noncollapsed, steady soliton, proving a famous conjecture by Perelman \cite{per1}. Other results in the steady three dimensional case have been obtained in H.-D.Cao--Catino--Q.Chen--Mantegazza--Mazzieri \cite{caocatchemanmaz} and Catino-Mastrolia-Monticelli \cite{catmasmon}. In particular, in \cite{caocatchemanmaz} the author showed rigidity just assuming that the Bach tensor is divergence free, which is equivalent to a second order vanishing condition on the Cotton tensor (see Section \ref{sec_def}).

In higher dimensions, H.-D.Cao--Q.Chen proved in \cite{caoche1} 
that complete $n$-dimensional ($n\ge 3$) locally conformally flat gradient steady Ricci solitons are isometric to either a finite quotient of $\RR^n$ or the Bryant soliton. The same result for $n\ge 4$ was proved independently by Catino--Mantegazza in \cite{catman} by using different methods. When $n=4$, X.Chen--Y.Wang \cite{chewan} showed that any four dimensional complete half-conformally flat gradient steady Ricci soliton is either Ricci flat, or isometric to the Bryant Soliton. Again, these are rigidity results under zero order conditions on Weyl. 

Classification results have been obtained in \cite{caocatchemanmaz} for Bach flat steady solitons case in dimension $n\geq 4$. In particular, it follows that Bach flatness implies local conformal flatness. It is still an open question if similar results can be obtained under first order vanishing conditions on Weyl.

The case of {\em expanding} solitons is clearly the less rigid. However, several interesting results under vanishing conditions on Weyl have been obtained, see for instance \cite{caocatchemanmaz, catman}.

\

We remark that all the aforementioned results rely on {\em vanishing conditions} involving zero, first or specific second order derivatives of the Weyl tensor. The aim of this paper is to obtain classification of gradient Ricci solitons under much weaker assumptions, only requiring the vanishing of a {\em fourth order} divergence of Weyl. Since, in dimension three, Weyl is identically null, our results in this case will require the vanishing of a third order divergence of the Cotton tensor. Hence, in every dimension, we only require a {\em scalar condition} on the Weyl (Cotton) curvature of the soliton in order to conclude.

In their study of the geometry of locally conformally flat and Bach flat gradient solitons, H.-D. Cao and Q. Chen \cite{caoche1, caoche2} introduced a three tensor $D$ related to the geometry of the level surfaces of the potential $f$. The vanishing of $D$, which is a consequence of the curvature assumption on Weyl, is a crucial ingredient in their classification results. In particular, they showed that every gradient Ricci soliton satisfies the two conditions
 \begin{align*}
    &C_{ijk}+f_t W_{tijk} = D_{ijk},\\     &B_{ij} = \frac{1}{n-2}\sq{\nabla_k D_{ijk}+\pa{\frac{n-3}{n-2}}f_tC_{jit}}.
  \end{align*}
  The above equations can be intended as integrability conditions for gradient Ricci solitons. In a recent work by Catino--Mastrolia--Monticelli--Rigoli \cite{catmasmonrig} the authors derived higher order integrability conditions involving the tensor $D$ and the Cotton tensor, which will play an important role in our proof. For the sake of completeness, we will recall and prove them in Section \ref{sec_int}.

In order to precisely state our results, we introduce the following definitions
\begin{eqnarray*}
\operatorname{div}^4(W) &=& \nabla_k\nabla_j\nabla_l\nabla_i W_{ikjl} \\
\operatorname{div}^3(C) &=& \nabla_i\nabla_j\nabla_kC_{ijk} \,,
\end{eqnarray*}
where $W$ and $C$ are the Weyl and the Cotton tensors, respectively (see Section \ref{sec_def} for definitions and notation). Note that, in dimension $n\geq 4$, $\operatorname{div}^4(W)=0$ if and only if $\operatorname{div}^3(C)=0$ (see equation \eqref{def_Cotton_comp_Weyl}).

Our first main result is the following classification theorem for gradient shrinking Ricci solitons of dimension $n\geq 4$ with $\operatorname{div}^4(W)=0$.

\begin{theorem}\label{thm_shr} Every complete gradient shrinking Ricci soliton of dimension $n\geq 4$ with $\operatorname{div}^4(W)=0$ on $M$ is either Einstein or isometric to a finite quotient of of $N^{n-k}\times \mathbb{R}^k$, $(k > 0)$ the product of a Einstein manifold $N^{n-k}$ with the Gaussian shrinking soliton $\mathbb{R}^k$.
\end{theorem}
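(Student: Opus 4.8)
The plan is to deduce from the single scalar hypothesis that the Cotton tensor vanishes identically, $C_{ijk}\equiv 0$, so that the soliton has harmonic Weyl tensor, and then to apply the classification of gradient shrinking solitons with $\operatorname{div}(W)=0$ due to Fernandez-Lopez--Garcia-Rio and Munteanu--Sesum \cite{fergar, munses}, which yields precisely the stated dichotomy. The first reduction is algebraic: by the Weyl--Cotton relation \eqref{def_Cotton_comp_Weyl} one has $\operatorname{div}(W)=\frac{n-3}{n-2}\,C$ (up to a permutation of indices), so for $n\geq 4$ the assumption $\operatorname{div}^4(W)=0$ is equivalent to the scalar equation $\operatorname{div}^3(C)=\nabla_i\nabla_j\nabla_k C_{ijk}=0$, i.e. to the vanishing of the double divergence of the Bach tensor.

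The core of the argument is to turn this fourth-order scalar condition into a weighted $L^2$ identity. First I would insert the Cao--Chen identities $C_{ijk}+f_t W_{tijk}=D_{ijk}$ and $B_{ij}=\frac{1}{n-2}\big[\nabla_k D_{ijk}+\frac{n-3}{n-2}f_t C_{jit}\big]$, together with the higher-order integrability conditions recalled in Section \ref{sec_int}, into the expression $\operatorname{div}^3(C)$. Since $C$ and $D$ are skew in their first two indices, each pair of contracted derivatives produces, via the commutator $[\nabla_i,\nabla_j]$, curvature terms in place of a derivative; this lowers the effective order and recasts $\operatorname{div}^3(C)=0$ as a relation whose terms are algebraic in $C$, $D$, $W$, $\nabla f$ and the curvature. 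Using $D_{ijk}=C_{ijk}+f_t W_{tijk}$ to eliminate $D$, the goal is to arrange this relation as a weighted divergence plus a quadratic remainder in $C$.

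Integrating against $e^{-f}\,dV$ is then the decisive step. On a complete shrinker the Cao--Zhou estimates give $f\sim\tfrac14\,\dist(\cdot,x_0)^2$ and finite weighted volume, and the curvature together with its covariant derivatives lies in the relevant weighted $L^2$ spaces; this is exactly what makes the weighted integrations by parts legitimate and kills the boundary terms at infinity. The weighted divergence integrates to zero, and the shrinking sign $\lambda>0$ should render the remaining quadratic remainder sign-definite and coercive, collapsing the identity to $\int_M |C|^2\,e^{-f}\,dV\leq 0$ and hence to $C\equiv 0$. With $\operatorname{div}(W)=0$ established, the cited harmonic-Weyl classification finishes the proof; the steady and expanding cases run along the same lines, with the structural identities of Section \ref{sec_int} replacing the positivity of $\lambda$.

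The main obstacle is precisely this central identity: to carry out the three integrations by parts so that every cross term between $C$, $D=C+f_t W$, $W$ and the curvature either cancels through the soliton and integrability identities or assembles into a single sign-definite quantity governed by $\lambda$. Pinning down the signs is delicate, and it is essential that the manipulation produce a bound on $\int_M |C|^2 e^{-f}$ rather than the weaker $\int_M |D|^2 e^{-f}$: the vanishing of $D$ would force the more rigid Bach-flat conclusion, which genuinely fails on the products $N^{n-k}\times\RR^k$ with $k\geq 2$ that must appear in the statement. Equally delicate is checking that every weighted integral that arises is finite and that no term survives at infinity.
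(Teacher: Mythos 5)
Your skeleton agrees with the paper's: reduce $\operatorname{div}^4(W)=0$ to $\operatorname{div}^3(C)=0$, prove $C\equiv 0$, and invoke \cite{fergar,munses}. But the central analytic step --- the weighted $L^2$ identity --- is left as a hope in your write-up, and the two mechanisms you propose for realizing it both fail. First, you justify the integrations by parts and the vanishing of boundary terms by asserting that ``the curvature together with its covariant derivatives lies in the relevant weighted $L^2$ spaces.'' This is not known for a general complete noncompact shrinker: the available estimates (quadratic growth and properness of $f$ from \cite{caozho}, weighted $L^2$ bounds on $\ricc$ from \cite{munses}) say nothing about $|W|$, $|\nabla C|$, or the third covariant derivatives of curvature that enter $\operatorname{div}^3(C)$. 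The paper is structured precisely to avoid this issue, as it stresses in the introduction: the key formula (Theorem \ref{thm_intide}) is stated for cutoffs $\psi(f)$ of \emph{compact support} (compactness being guaranteed by properness of $f$), so every integral is over a compact set and no decay or integrability of curvature at infinity is ever required.

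Second, your sign mechanism is not the right one: you expect the shrinking sign $\lambda>0$ to make a ``quadratic remainder sign-definite and coercive.'' No such coercivity occurs; Theorem \ref{thm_intide} is identical for shrinking, steady and expanding solitons, and $\lambda$ never enters with a sign (which is why the same scheme proves Theorems \ref{thm_ste}--\ref{thm_exp3d}). In the paper the sign comes from a different, genuinely nontrivial idea: take $\psi(f)=e^{-f}\phi(f)$ with $\phi$ a decreasing cutoff, integrate by parts once so that the hypothesis $\operatorname{div}^3(C)=0$ kills the top-order term, and then apply the \emph{same} integral identity a second time with $\widetilde{\psi}=e^{-f}\phi'(f)$, which identifies the leftover term as $\tfrac12\int_M |C|^2 e^{-f}\phi'(f)\leq 0$ since $\phi'\leq 0$. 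This double use of the identity is the missing ingredient; without it (or without unproven integrability hypotheses) your outline does not close. Your observation that the argument must produce $\int_M|C|^2$ rather than $\int_M|D|^2$ is correct and consistent with the paper --- the quotients $N^{n-k}\times\RR^k$ with $k\geq 2$ have $C=0$ but $D\neq 0$ --- but it does not substitute for the missing step.
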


This theorem improves the results on gradient shrinking solitons with harmonic Weyl tensor in \cite{fergar, munses}. In the case of steady and expanding solitons, under natural Ricci curvature assumptions, we show that the soliton has harmonic Weyl curvature. Namely, we have the following theorems.

\begin{theorem}\label{thm_ste} Let $(M^n,g)$, $n\geq 4$, be a complete gradient steady Ricci soliton with positive Ricci curvature and such that the scalar curvature attains its maximum at some point. If $\operatorname{div}^4(W)=0$ on $M$ , then $(M^n,g)$ has harmonic Weyl curvature.
\end{theorem}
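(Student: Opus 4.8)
The plan is to reduce the statement to the vanishing of the Cotton tensor $C$ and then to extract this from the scalar hypothesis $\operatorname{div}^4(W)=0$ by combining the integrability conditions of Section~\ref{sec_int} with a maximum principle tailored to the steady structure. First I would record the reduction: for $n\geq 4$ the identity relating $\nabla_l W_{ikjl}$ to the Cotton tensor (see~\eqref{def_Cotton_comp_Weyl}) shows that $(M^n,g)$ has harmonic Weyl curvature, i.e. $\operatorname{div}(W)=0$, if and only if $C\equiv 0$, and that the hypothesis $\operatorname{div}^4(W)=0$ is exactly the scalar condition $\operatorname{div}^3(C)=\nabla_i\nabla_j\nabla_k C_{ijk}=0$. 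Hence it suffices to prove $C\equiv 0$.

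Next I would exploit the steady identities $\ricc=-\nabla^2 f$, $R=-\Delta f$ and $R+\abs{\nabla f}^2=R_{\max}$. Since $\ricc>0$ and $\nabla_iR=2R_{it}\nabla_tf$, the point $p$ where $R$ attains its maximum satisfies $\nabla f(p)=0$; thus the normalizing constant equals $R_{\max}$, $f$ is strictly concave with $p$ as its unique critical point, the superlevel sets $\set{f\geq c}$ are compact, and on all of $M$ one has the bounds $0<R\leq R_{\max}$ and $\abs{\nabla f}^2=R_{\max}-R\leq R_{\max}$. These bounds are what will replace the finiteness of the weighted volume available in the shrinking case.

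The heart of the argument is an elliptic identity for $C$. Starting from $C_{ijk}+f_tW_{tijk}=D_{ijk}$ and $B_{ij}=\frac{1}{n-2}[\nabla_kD_{ijk}+\frac{n-3}{n-2}f_tC_{jit}]$ together with their higher order consequences derived in Section~\ref{sec_int}, I would compute the drift Laplacian $\Delta_f\abs{C}^2$, with $\Delta_f=\Delta-\pair{\nabla f,\nabla\cdot\,}$, and reorganize the top order term so that it is governed by $\operatorname{div}^3(C)$. Imposing $\operatorname{div}^3(C)=0$ should then leave a differential inequality of the schematic form
\begin{equation*}
\Delta_f\abs{C}^2 \,\geq\, 2\abs{\nabla C}^2 + \mathcal{Q}(C,D),
\end{equation*}
where $\mathcal{Q}$ is quadratic in $C$ and $D$ with coefficients built from $\ricc$ and $\nabla f$. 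The assumption $\ricc>0$ enters precisely here, to arrange that $\mathcal{Q}$ has the favorable sign once the $D$ contributions are reabsorbed through the first integrability condition.

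The step I expect to be the main obstacle is the passage from this inequality to $C\equiv 0$ on the non-compact manifold. In the shrinking case $e^{-f}$ is integrable, and a single integration by parts against $e^{-f}\,dV$ yields $\int_M\abs{C}^2(\cdots)e^{-f}=0$ and hence $C\equiv 0$; here the weighted volume is infinite, so this is unavailable. Instead I would localize with cutoffs supported in the compact superlevel sets $\set{f\geq c}$, use the uniform bounds on $R$ and $\abs{\nabla f}^2$ above to control the boundary terms as $c\to-\infty$, and combine the resulting inequality with the existence of the maximum point $p$ of $R$ to conclude, via a Liouville-type argument or the Omori--Yau maximum principle (applicable since $\ricc$ is bounded below), that $\abs{C}^2\equiv 0$. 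The delicate point is to extract enough a priori control on $C$, $D$ and $\nabla C$ from the soliton equations and the curvature hypotheses to make both the localization and the sign of $\mathcal{Q}$ effective; the two hypotheses positive Ricci and ``$R$ attains its maximum'' are tailored exactly to supplying, respectively, the definite sign and the distinguished point together with the growth bounds on $f$.
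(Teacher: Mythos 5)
Your reduction to proving $C\equiv 0$ and your reading of the hypotheses are fine as far as they go: by \eqref{def_Cotton_comp_Weyl} harmonic Weyl curvature is equivalent to $C\equiv0$ and $\operatorname{div}^4(W)=0$ is equivalent to $\operatorname{div}^3(C)=0$; moreover positive Ricci together with the attained maximum of $R$ does give a unique critical point of $f$, compact superlevel sets, and (by \cite{caoche1}) properness and linear growth of $-f$. But the heart of your argument is a genuine gap. You never derive, and cannot derive, the claimed inequality $\Delta_f\abs{C}^2\geq 2\abs{\nabla C}^2+\mathcal{Q}(C,D)$ whose ``top order term is governed by $\operatorname{div}^3(C)$''. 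The drift Laplacian expands as $\Delta_f\abs{C}^2=2C_{ijk}\Delta_f C_{ijk}+2\abs{\nabla C}^2$, and the term $C_{ijk}\Delta_f C_{ijk}$ is a full contraction of second derivatives of $C$ against $C$ itself; by contrast $\operatorname{div}^3(C)=\nabla_i\nabla_j\nabla_k C_{ijk}$ is a single scalar iterated divergence in which $C$ appears only differentiated. There is no pointwise algebraic identity converting one into the other: a single scalar equation is far too weak to control the full Laplacian of the tensor $C$ at a point, and the integrability conditions \eqref{eq_3int}, \eqref{eq_4int} relate $\abs{C}^2$ to iterated divergences of $D$, not to $\Delta_f C$. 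The only way the scalar hypothesis can be brought to bear is by transferring derivatives, i.e.\ under an integral sign via integration by parts; this is precisely why the paper works with integral identities rather than with a maximum principle. In addition, your final step (Omori--Yau or a Liouville argument applied to $\abs{C}^2$) would require a priori boundedness or growth control of $\abs{C}$, which is not available from the soliton equations and the stated hypotheses.

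For comparison, the paper's proof contains no differential inequality and no maximum principle. One multiplies \eqref{eq_4int} by $\psi(f)$ and integrates by parts to obtain the identity of Theorem \ref{thm_intide}, valid for every $\psi$ with $\psi(f)$ compactly supported; one then applies this identity \emph{twice}, first with $\psi(f)=e^{f}\phi(-f)$ and then with $\widetilde{\psi}=e^{f}\phi'(-f)$, where $\phi$ is a cutoff with $\phi'\leq 0$. The second application turns the error term $-\int_M C_{kti,it}\,f_k\,e^{f}\phi'(-f)$ into $\frac{1}{2}\int_M\abs{C}^2 e^{f}\phi'(-f)\leq 0$, while the hypothesis $\operatorname{div}^3(C)=0$ annihilates the main term $\int_M C_{kti,itk}\,e^{f}\phi(-f)$; hence $\int_M \abs{C}^2 e^{f}\phi(-f)\leq 0$ and $C\equiv 0$. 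Note also that positive Ricci and the attained maximum of $R$ are used there only to guarantee, via \cite{caoche1}, that $-f$ is proper with linear growth so that the cutoffs $\phi(-f)$ have compact support --- not, as you propose, to produce a favorable sign in any quadratic curvature term.
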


\begin{theorem}\label{thm_exp} Let $(M^n,g)$, $n\geq 4$, be a complete gradient expanding Ricci soliton with nonnegative Ricci curvature. If $\operatorname{div}^4(W)=0$ on $M$, then $(M^n,g)$ has harmonic Weyl curvature.
\end{theorem}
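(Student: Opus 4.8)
The plan is to show that the fourth-order hypothesis forces the Cotton tensor to vanish identically: since $n\geq 4$, by \eqref{def_Cotton_comp_Weyl} the condition $C\equiv 0$ is equivalent to $\operatorname{div}(W)=0$, i.e. to harmonic Weyl curvature, which is exactly the desired conclusion. First I would record that $\operatorname{div}^4(W)=0$ is equivalent to $\operatorname{div}^3(C)=\nabla_i\nabla_j\nabla_k C_{ijk}=0$. Because $C_{ijk}$ is totally trace-free and skew-symmetric in its last two indices, commuting the inner derivatives $\nabla_j\nabla_k$ in $\nabla_j\nabla_k C_{ijk}$ produces only curvature contractions, so the purely fourth-order content of the hypothesis already drops by one order: the scalar $\operatorname{div}^3(C)$ can be rewritten as the divergence of a one-form built from $C$, its first derivatives and the curvature.

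The heart of the argument is to feed into this the integrability conditions. I would use the two displayed identities together with the higher-order integrability conditions recalled in Section~\ref{sec_int}, which relate $\nabla D$, $\nabla C$, $C$, $D$, $\ricc$, $W$ and $\nabla f$; substituting the soliton Hessian $\nabla^2 f=\lambda g-\ricc$ and the relation $C_{ijk}+f_t W_{tijk}=D_{ijk}$ then expresses every term through $C$, $D$, $\ricc$ and $\nabla f$ alone. The goal of this computation is a \emph{weighted divergence identity} of the schematic form
\begin{equation*}
\operatorname{div}\!\big(e^{-f}X\big)=e^{-f}\Big(|C|^2+Q\Big),
\end{equation*}
where $X$ is an explicit one-form (quadratic in $C$, $D$ and $\nabla f$) whose construction uses $\operatorname{div}^3(C)=0$ to cancel the top-order terms, and where $Q$ collects contractions of $\ricc$ against $C$, $D$ and $W$.

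The final step is the integration. Integrating this identity over a geodesic ball $B_r$ and applying the divergence theorem gives $\int_{B_r}e^{-f}(|C|^2+Q)=\int_{\partial B_r}e^{-f}\langle X,\nu\rangle$. Here the expanding hypothesis enters decisively: since $\ricc\geq 0$ one has $R\geq 0$ and, because $\nabla^2 f=\lambda g-\ricc\leq\lambda g<0$, the potential is uniformly concave, which yields the a priori control on $f$, $|\nabla f|$ and on the growth of $C$ and $D$ needed to run the scheme. The sign condition $\ricc\geq 0$ is moreover what gives the curvature term $Q$ the correct sign, so that once the boundary integrals are shown to be negligible along a suitable exhaustion one concludes $\int_M e^{-f}|C|^2\leq 0$, hence $C\equiv 0$ and the soliton has harmonic Weyl curvature.

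The main obstacle is precisely this last, analytic, step. In the shrinking case the weight $e^{-f}$ decays like a Gaussian and the boundary terms vanish automatically; for an expanding soliton $f\to-\infty$ and $e^{-f}$ instead \emph{grows}, so the vanishing of $\int_{\partial B_r}e^{-f}\langle X,\nu\rangle$ along some sequence $r_j\to\infty$ is not free and must be extracted from the nonnegativity of the Ricci curvature, together with the resulting control on the geometry at infinity. This is the exact analogue, in the expanding setting, of the role played by the attainment of the maximum of the scalar curvature in the steady case of Theorem~\ref{thm_ste}.
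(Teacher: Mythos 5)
Your reduction of the statement to proving $C\equiv 0$ is correct, and your general strategy (integrability conditions plus a weighted integration) is in the spirit of the paper, but the analytic core of your argument has a genuine gap that you yourself flag without resolving. First, your weight is the wrong one. For an expanding soliton with $\ricc\geq 0$ the potential satisfies $\nabla^2 f=\lambda g-\ricc\leq \lambda g<0$, so $f\to-\infty$ quadratically and $e^{-f}$ grows like the inverse of a Gaussian; no curvature or volume control coming from $\ricc\geq 0$ can make boundary integrals of the form $\int_{\partial B_r}e^{-f}\langle X,\nu\rangle$ vanish along any exhaustion, because the weight itself blows up exponentially in $r^2$ while $X$ has no reason to decay at such a rate. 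The paper circumvents this entirely: it takes $\psi(f)=e^{f}\phi(-f)$, where $\phi$ is a compactly supported cutoff and where properness and quadratic growth of $-f$ (this is the \emph{only} place the hypothesis $\ricc\geq0$ enters, via \cite{caoche1, caocatchemanmaz}) guarantee that $\psi(f)$ has compact support, so that no boundary terms ever appear when applying Theorem \ref{thm_intide}. Note also that the sign of the exponent is forced, not cosmetic: with $e^{f}$ one has $f_k e^{f}=(e^{f})_k$, and after integrating by parts the hypothesis $\operatorname{div}^4(W)=0$ kills the top-order term via \eqref{eq127}, while the leftover cutoff term $-\int_M C_{kti,it}\,f_k\,e^{f}\phi'(-f)$ is handled by a \emph{second} application of Theorem \ref{thm_intide} with $\widetilde{\psi}=e^{f}\phi'(-f)$, which identifies it with $\frac{1}{2}\int_M|C|^2 e^{f}\phi'(-f)\leq 0$; running the same scheme with $e^{-f}$ produces this term with the opposite, useless, sign.

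Second, your claim that $\ricc\geq 0$ gives the curvature remainder $Q$ ``the correct sign'' is unsupported, and it is not how the argument closes. The identity of Theorem \ref{thm_intide}, which encodes the integrability conditions \eqref{eq_3int} and \eqref{eq_4int}, is exact: after the integrations by parts there is no curvature term left over that needs a sign, and nonnegative Ricci curvature plays no pointwise role anywhere in the proof. In short, the algebraic part of your plan is sound, but the step you call ``the main obstacle'' --- the vanishing of the boundary terms --- is precisely the missing proof, and with the weight $e^{-f}$ it is not merely hard but false in general; the fix is to abandon geodesic-ball exhaustions and boundary estimates altogether, replacing them by compactly supported test functions of the potential with weight $e^{+f}$, and to absorb the cutoff error by re-applying the same integral formula.
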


%\begin{rem} Kim Jongsu
%\end{rem}
%

In dimension three, in the steady and expanding cases, we can prove stronger results. Namely we have the following theorems.

\begin{theorem}\label{thm_ste3d} Every three dimensional complete gradient steady Ricci soliton with $\operatorname{div}^3(C)=~0$ on $M$ is isometric to either a finite quotient of $\mathbb{R}^3$ or the Bryant soliton (up to scaling).
\end{theorem}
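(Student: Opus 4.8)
The plan is to reduce the statement to the vanishing of the Cotton tensor and then to invoke the known classification of locally conformally flat steady solitons. In dimension three the Weyl tensor vanishes identically, so the first integrability condition recalled in the excerpt collapses to $C_{ijk}=D_{ijk}$, while the second one reduces to $B_{ij}=\nabla_k C_{ijk}$ (since $n-2=1$ and $\frac{n-3}{n-2}=0$). A complete three dimensional gradient steady soliton with $C\equiv 0$ is locally conformally flat, hence by the theorem of H.-D.Cao--Q.Chen \cite{caoche1} (see also Catino--Mantegazza \cite{catman}) it is isometric to a finite quotient of $\mathbb{R}^3$ or to the Bryant soliton, up to scaling. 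Thus everything reduces to the implication $\operatorname{div}^3(C)=0 \Longrightarrow C\equiv 0$. Equivalently, recalling that $\operatorname{div}^3(C)$ is the double divergence $\nabla_i\nabla_j B_{ij}$ of the Bach tensor, this should be read as strengthening the divergence-free-Bach rigidity of \cite{caocatchemanmaz} to the weaker, purely scalar, hypothesis.

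For the core step I would combine the higher order integrability conditions of \cite{catmasmonrig}, recalled in Section \ref{sec_int}, which express the relevant second covariant derivatives of $C=D$ (and hence the scalar $\operatorname{div}^3(C)=\nabla_i\nabla_j B_{ij}$) in terms of $C$ itself, the curvature, and the potential, with the soliton equation $\ricc+\nabla^2 f=0$ and the contracted steady identities $\nabla R=2\,\ricc(\nabla f)$ and $R+|\nabla f|^2=\mathrm{const}$. Pairing the resulting tensorial Bochner-type identity for $C$ with the test density $e^{-f}$ and integrating by parts, the scalar hypothesis $\operatorname{div}^3(C)=0$ should annihilate the top-order term and leave an identity of the schematic form $0=\int_M \big(|\nabla C|^2+q(C)\big)\,e^{-f}\,dV+(\text{cutoff terms})$, where $q(C)$ is a quadratic curvature expression; here I would exploit the fact, special to dimension three, that $C$ is the dual of the symmetric trace-free Cotton--York tensor in order to arrange $q(C)$ with a favorable sign, so that once the error terms are shown to be negligible one concludes $C\equiv D\equiv 0$.

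The main obstacle is precisely the convergence issue hidden in that last step. Unlike the shrinking case of Theorem \ref{thm_shr}, where the Gaussian weight $e^{-f}$ is integrable and the integrations by parts are immediate, on a steady soliton $f$ decreases to $-\infty$, the weight $e^{-f}$ is not integrable, and neither the finiteness of the integrals nor the vanishing of the boundary contributions is automatic. To control this I would insert a family of cutoff functions $\phi_R$ supported on geodesic balls of radius $R$, absorb the gradient-of-cutoff contributions into the leading term $\int|\nabla C|^2 e^{-f}$ by Cauchy--Schwarz, and then let $R\to\infty$. This is where the special three dimensional features enter, and this is exactly what allows the extra hypotheses of Theorem \ref{thm_ste} (positive Ricci curvature and an interior maximum of $R$) to be dropped: for a three dimensional steady soliton one has $R\ge 0$, and after normalizing $R+|\nabla f|^2=1$ the bound $|\nabla f|\le 1$ forces at most linear growth of $f$ and hence at most exponential growth of $e^{-f}$; combined with the curvature decay of complete steady solitons, this should give enough control on $\int|\nabla\phi_R|^2\,|C|^2\,e^{-f}$ to send the error terms to zero.

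Having obtained $C\equiv 0$, the soliton is locally conformally flat and the reduction of the first paragraph applies, yielding that $(M^3,g)$ is a finite quotient of $\mathbb{R}^3$ or the Bryant soliton. I expect the delicate point throughout to be not the algebra of the integrability identities but the justification of the integrations by parts and the decay of the cutoff terms in the non-integrable steady regime; securing a clean definite sign for the quadratic remainder $q(C)$ in the second step may also require careful use of the Cotton--York reformulation available only in dimension three.
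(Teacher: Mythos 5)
Your reduction (prove $C\equiv 0$, then invoke \cite{caoche1}) is the same endgame as the paper's, but the core analytic step of your proposal is left entirely schematic and, as outlined, it would fail. First, the weight: with the paper's convention $\ricc+\nabla^2 f=\lambda g$, $\lambda=0$, the potential satisfies $f\to-\infty$ at infinity (when anything grows at all, it is $-f$), so $e^{-f}$ blows up exponentially while the workable, decaying weight is $e^{f}$; this is not a cosmetic point, because your plan to absorb geodesic-ball cutoff errors requires integrands that decay, whereas a general complete steady three-dimensional soliton has \emph{no} uniform curvature decay (on $\Sigma^2\times\RR$, the cigar times a line, $R$ is constant along the $\RR$-factor), and volume comparison only gives polynomial volume growth against an exponentially growing weight. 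Second, the ``tensorial Bochner-type identity'' with a term $|\nabla C|^2$ and a sign-definite quadratic remainder $q(C)$ does not appear among the integrability conditions \eqref{eq_1int}--\eqref{eq_4int} and is never established in your argument; the paper needs nothing of the sort. Its mechanism is a \emph{double} application of the integral identity of Theorem \ref{thm_intide}: once with $\psi=e^{f}\phi(-f)$ and once with $\tilde\psi=e^{f}\phi'(-f)$, where the only sign used is $\phi'\leq 0$, yielding $\frac12\int_M|C|^2e^{f}\phi(-f)\leq \int_M C_{kti,itk}\,e^{f}\phi(-f)=0$ directly from the hypothesis.

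The decisive missing idea, however, is structural. Any cutoff built from $f$ requires $-f$ to be proper, and this is simply false on a general steady three-dimensional soliton: on $\Sigma^2\times\RR$ the sublevel sets of $-f$ are unbounded cylinders, so neither potential-based nor ball-based integration by parts can be closed there --- and indeed on that manifold $C\not\equiv 0$, so no correct argument can derive $C\equiv 0$ from integration by parts alone; the example is consistent with the theorem only because the hypothesis $\operatorname{div}^3(C)=0$ \emph{fails} on it, which must be checked by direct computation. The paper therefore first proves a trichotomy: by B.-L. Chen \cite{che} the soliton has nonnegative sectional curvature, by Hamilton's identity \eqref{eq_hamide} it has bounded curvature, and Hamilton's strong maximum principle \cite{choluni} forces it to be flat, or to have strictly positive sectional curvature, or to split as $\Sigma^2\times\RR$. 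In the positive-curvature case, \cite[Proposition 2.3]{denzhu} gives a unique critical point of $f$, hence $-f$ is proper with linear growth by \cite{caoche1}, and only then does the weighted integral argument apply. The split case is excluded not analytically but by the explicit computation of Lemma \ref{lem_sig}, which shows $\operatorname{div}^3(C)(O)=\frac18 R(O)^3\neq 0$ at the origin of $\Sigma^2\times\RR$. Without this trichotomy and the pointwise exclusion of the cigar-times-line geometry, your proof has no way to handle exactly the case where its analytic machinery breaks down.
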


\begin{theorem}\label{thm_exp3d} Every three dimensional complete gradient expanding Ricci soliton with nonnegative Ricci curvature and $\operatorname{div}^3(C)=0$ on $M$ is rotationally symmetric.
\end{theorem}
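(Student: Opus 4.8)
The plan is to reduce the statement to the already available classification of locally conformally flat expanders. Precisely, I would first prove that the scalar hypothesis $\operatorname{div}^3(C)=0$ forces the Cotton tensor to vanish identically, $C\equiv 0$ on $M$. Since in dimension three a Riemannian manifold is locally conformally flat if and only if its Cotton tensor vanishes, this makes $(M^3,g)$ locally conformally flat, and the classification of complete locally conformally flat gradient expanding Ricci solitons with nonnegative Ricci curvature (Cao--Chen \cite{caoche1}, Catino--Mantegazza \cite{catman}) then yields at once that $(M^3,g)$ is rotationally symmetric. In this way the entire content of the theorem is concentrated in the vanishing of $C$, and the argument runs in close parallel with that of Theorem \ref{thm_ste3d}, the differences being confined to the sign of $\lambda$ and to the asymptotic analysis at infinity.

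To prove $C\equiv 0$ I would exploit the higher order integrability conditions recalled in Section \ref{sec_int}. In dimension three the Weyl tensor vanishes, so the Cao--Chen tensor $D$ coincides with the Cotton tensor $C$, and those identities specialize to relations among $\operatorname{div}^3(C)$, the soliton data $(f,\lambda)$, and the tensors $C=D$ and $\nabla C$. Combining them with the assumption $\operatorname{div}^3(C)=0$, the goal is to produce a weighted divergence identity of the schematic form
\[
\operatorname{div}\big(e^{-f}\,V\big)\;=\;-\,e^{-f}\,\varphi\,|C|^{2},
\]
for an explicit vector field $V$ built from $C$ and $\nabla f$ and an explicit weight $\varphi$. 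The crucial structural point is that every divergence generated along the way is absorbed into the left hand side, so that only the bulk term $\varphi\,|C|^2$ survives, and that the expanding sign $\lambda<0$ together with $\ricc\ge 0$ is exactly what guarantees $\varphi\ge 0$.

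It then remains to integrate this identity over an exhaustion of $M$ by geodesic balls and to show that the boundary flux tends to zero, so that $\int_M \varphi\,|C|^2\,e^{-f}=0$ and hence $C\equiv 0$. This is where the completeness and noncompactness of $M$, and the curvature hypothesis, enter decisively: for an expanding soliton with nonnegative Ricci curvature the potential $f$ is concave and proper, the scalar curvature is bounded, and $|\nabla f|^2$ is controlled through the first integral $R+|\nabla f|^2-2\lambda f=\mathrm{const}$, which furnishes the a priori estimates needed to make the boundary terms negligible. The main obstacle, exactly as in the steady case, is twofold: assembling the many contractions of $C=D$ with $\nabla f$ and with the curvature produced by the integrability conditions into the single sign-definite expression above, and then rigorously controlling the flux of $e^{-f}V$ at infinity on the noncompact expander; both steps are delicate and constitute the computational and analytic heart of the proof.
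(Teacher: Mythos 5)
Your reduction is the right one and matches the paper: prove $C\equiv 0$, note that in dimension three this means local conformal flatness, and invoke \cite{caoche1} for rotational symmetry. The gap is in the core step, which you leave schematic, and the scheme you propose has two concrete defects. First, the weight is backwards: on a complete noncompact expander with $\ricc\geq 0$ the function $-f$ is proper with quadratic growth (see \cite{caocatchemanmaz}), i.e.\ $f\to-\infty$ like $-\tfrac{|\lambda|}{2}\,r^2$, so your weight $e^{-f}$ blows up like $e^{|\lambda| r^2/2}$ at infinity. Controlling the flux of $e^{-f}V$ over an exhaustion by geodesic balls would then require super-Gaussian decay of $C$ and its derivatives, which is not available and which the theorem is precisely designed not to assume; the decaying weight is $e^{f}$. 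Second, and more fundamentally, the sign-definite identity $\operatorname{div}(e^{-f}V)=-e^{-f}\varphi\,|C|^2$ with $\varphi\geq 0$ is not exhibited, and the claimed source of the sign ($\lambda<0$ together with $\ricc\geq 0$) does not correspond to any mechanism in the integrability conditions. In dimension three one does have the pointwise identity \eqref{eq_poiide}, $\tfrac12|C|^2=-C_{kti,it}f_k$, but if you try to convert its right-hand side into a pure divergence using the soliton equation and \eqref{eq_3int}--\eqref{eq_4int}, the $|C|^2$ terms cancel and you are left with the tautology $C_{itk,tki}=-\nabla_t\big(C_{kti,i}f_k\big)$; under your hypothesis this only says $\operatorname{div}\big(C_{kti,i}f_k\,\partial_t\big)=0$, which gives nothing about $|C|^2$.

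What actually closes the argument in the paper is different in structure: $\ricc\geq 0$ is used only to make $-f$ proper, so that $\psi(f)=e^{f}\phi(-f)$ (with $\phi$ a cutoff, $\phi'\leq 0$) has compact support; then the integral identity of Theorem \ref{thm_intide} is applied \emph{twice}. The first application plus one integration by parts produces the term $\int_M C_{kti,itk}\,e^{f}\phi(-f)$, which vanishes by the hypothesis $\operatorname{div}^3(C)=0$, and an error term $-\int_M C_{kti,it}f_k\,e^{f}\phi'(-f)$; the key observation is that this error term is again of the form handled by Theorem \ref{thm_intide} with $\widetilde{\psi}=e^{f}\phi'(-f)$, hence equals $\tfrac12\int_M|C|^2 e^{f}\phi'(-f)\leq 0$. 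This nested double use of the same identity is what replaces your hoped-for sign-definite bulk term and your boundary-flux analysis; without it (or a genuinely new idea supplying the missing identity), your plan does not close.
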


Note that, in dimension three, the Bach tensor is defined as (see equation \eqref{def_Bach_comp})
$$
B_{ij}=\nabla_k C_{ijk}\,.
$$
Hence, in this case, the condition $\operatorname{div}^3(C)= 0$ is equivalent to $\operatorname{div}^2 (B)=\nabla_i \nabla_j B_{ij}= 0$ and Theorems \ref{thm_ste3d} and \ref{thm_exp3d} improve the results  \cite[Corollary 1.3]{caocatchemanmaz} and \cite[Theorem 5.9]{caocatchemanmaz}, respectively.

Note also that on the steady three dimensional gradient Ricci soliton $\Sigma^2 \times\RR$ the ``triple divergence'' of the Cotton tensor $\operatorname{div}^3(C)$ does not vanish identically, see Lemma \ref{lem_sig}.

Finally note that, as it will be clear from the proof, the scalar assumptions on the vanishing of $\operatorname{div}^3(C)$ and $\operatorname{div}^4(W)$ in all the above theorems can be trivially relaxed to a (suitable) inequality. For instance, Theorem \ref{thm_ste3d} holds just assuming $\operatorname{div}^3(C)\leq 0$ on $M$.

\

The proof of our results relies heavily on two main ingredients. The first is given by new integrability conditions for gradient Ricci solitons that we show in Proposition \ref{pro_int}. The second one is an integral formula (see Theorem \ref{thm_intide}) which relates the squared norm $|C|^2$ with the double divergence of the Cotton tensor $C$, for every gradient Ricci soliton and for a suitable family of cutoff functions (depending on the potential $f$) having compact support. A careful (double) use of this identity allows us to avoid imposing any Lebesgue integrability assumptions on the curvature of the soliton in our main results.

\section{Definitions and notation}
\label{sec_def}

The Riemann curvature
operator of a Riemannian manifold $(M^n,g)$ is defined 
as in~\cite{gahula} by
$$
\mathrm{Riem}(X,Y)Z=\nabla_{Y}\nabla_{X}Z-\nabla_{X}\nabla_{Y}Z+\nabla_{[X,Y]}Z\,.
$$ 
Throughout the article, the Einstein convention of summing over the repeated indices will be adopted. In a local coordinate system the components of the $(3,1)$-Riemann 
curvature tensor are given by
$R^{l}_{ijk}\tfrac{\partial}{\partial
  x^{l}}=\mathrm{Riem}\big(\tfrac{\partial}{\partial
  x^{i}},\tfrac{\partial}{\partial
  x^{j}}\big)\tfrac{\partial}{\partial x^{k}}$ and we denote by
$R_{ijkl}=g_{lm}R^{m}_{ijk}$ its $(4,0)$-version. The Ricci tensor is obtained by the contraction 
$R_{ik}=g^{jl}R_{ijkl}$ and $R=g^{ik}R_{ik}$ will 
denote the scalar curvature. The so called Weyl tensor is then 
defined by the following decomposition formula (see~\cite[Chapter~3,
Section~K]{gahula}) in dimension $n\geq 3$,
\begin{eqnarray}
\label{Weyl}
W_{ijkl}  & = & R_{ijkl} \, - \, \frac{1}{n-2} \, (R_{ik}g_{jl}-R_{il}g_{jk}
+R_{jl}g_{ik}-R_{jk}g_{il})  \nonumber \\
&&\,+\frac{R}{(n-1)(n-2)} \,
(g_{ik}g_{jl}-g_{il}g_{jk})\, \, .
\end{eqnarray}
The Weyl tensor shares the symmetries of the curvature
tensor. Moreover, as it can be easily seen by the formula above, all of its contractions with the metric are zero, i.e. $W$ is totally trace-free. In dimension three, $W$ is identically zero on every Riemannian manifold, whereas, when $n\geq 4$, the vanishing of the Weyl tensor is
a relevant condition, since it is  equivalent to the local
  conformal flatness of $(M^n,g)$. We also recall that in dimension $n=3$,  local conformal
  flatness is equivalent to the vanishing of the Cotton tensor
\begin{equation}\label{def_cot}
C_{ijk} =  R_{ij,k} - R_{ik,j}  - 
\frac{1}{2(n-1)}  \big( R_k  g_{ij} -  R_j
g_{ik} \big)\,,
\end{equation}
where $R_{ij,k}=\nabla_k R_{ij}$ and $R_k=\nabla_k R$ denote, respectively, the components of the covariant derivative of the Ricci tensor and of the differential of the scalar curvature.
By direct computation, we can see that the Cotton tensor $C$
satisfies the following symmetries
\begin{equation}\label{CottonSym}
C_{ijk}=-C_{ikj},\,\quad\quad C_{ijk}+C_{jki}+C_{kij}=0\,,
\end{equation}
moreover it is totally trace-free, 
\begin{equation}\label{CottonTraces}
g^{ij}C_{ijk}=g^{ik}C_{ijk}=g^{jk}C_{ijk}=0\,,
\end{equation}
by its skew--symmetry and Schur lemma.  Furthermore, it satisfies
\begin{equation}\label{eq_nulldivcotton}
C_{ijk,i} = 0,
\end{equation}
see for instance \cite[Equation 4.43]{catmasmonrig}. We recall that, for $n\geq 4$,  the Cotton tensor can also be defined as one of the possible divergences of the Weyl tensor:
 \begin{equation}\label{def_Cotton_comp_Weyl}
 C_{ijk}=\pa{\frac{n-2}{n-3}}W_{tikj, t}=-\pa{\frac{n-2}{n-3}}W_{tijk, t}.
 \end{equation}
 A computation shows that the two definitions coincide (see e.g. \cite{alimasrig}).

 In what follows a relevant role will be played by the \emph{Bach tensor}, first introduced in general relativity by Bach, \cite{bac}. By definition we have
 \begin{equation}\label{def_Bach_comp}
   B_{ij} = \frac{1}{n-3}W_{ikjl, lk} + \frac{1}{n-2}R_{kl}W_{ikjl} = \frac{1}{n-2}\pa{C_{jik, k}+R_{kl}W_{ikjl}}.
 \end{equation}

  A computation using the commutation rules for the second covariant derivative of the Weyl tensor or of the Schouten tensor (see \cite{catmasmonrig}) shows that the Bach tensor is symmetric (i.e. $B_{ij}=B_{ji}$); it is also evidently trace-free (i.e. $B_{ii}=0$). It is worth reporting here the following interesting formula for the divergence of the Bach tensor (see e. g. \cite{caoche2} for its proof)
\begin{equation}\label{diverBach}
  B_{ij, j} = \frac{n-4}{\pa{n-2}^2}R_{kt}C_{kti}.
\end{equation}

We recall here some useful equations satisfied by every gradient Ricci soliton $(M^n,g)$
 \begin{equation}\label{def_sol}
   R_{ij}+f_{ij}=\lambda g_{ij}, \quad \lambda \in \erre,
 \end{equation}
where $f_{ij}=\nabla_i\nabla_j f$ are the components of the Hessian of $f$ (see e.g. \cite{emilanman}).
\begin{lemma} Let $(M^n,g)$ be a gradient Ricci soliton of dimension $n\geq 3$. Then
\begin{equation}\label{eq_tra}
\Delta f + R = n \lambda
\end{equation}
\begin{equation}\label{eq_sch}
R_i = 2 f_t R_{it}
\end{equation}
\begin{equation}\label{eq_hamide}
R + |\nabla f|^2 = 2\lambda f + c
\end{equation}
for some constant $c\in\RR$.

\end{lemma}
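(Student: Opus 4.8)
All three identities follow from the soliton equation \eqref{def_sol} combined with the contracted second Bianchi identity and the commutation rules for covariant derivatives, so the plan is to derive them in sequence, using each one to bootstrap the next. For \eqref{eq_tra} I would simply take the metric trace of \eqref{def_sol}: contracting $R_{ij}+f_{ij}=\lambda g_{ij}$ with $g^{ij}$ and using $g^{ij}R_{ij}=R$, $g^{ij}f_{ij}=\Delta f$ and $g^{ij}g_{ij}=n$ yields $R+\Delta f=n\lambda$ immediately.

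Next, for \eqref{eq_sch} I would take the divergence of \eqref{def_sol}, i.e.\ apply $\nabla^i$ to both sides. The right-hand side is parallel and drops out. On the left, the Ricci term is handled by the contracted second Bianchi identity $\nabla^i R_{ij}=\tfrac12\nabla_j R$, while the Hessian term $\nabla^i\nabla_i\nabla_j f=\Delta(\nabla_j f)$ is rewritten by commuting the Laplacian past the gradient; the Bochner-type formula gives $\Delta\nabla_j f=\nabla_j\Delta f+R_{jt}f_t$. Substituting the already-proven \eqref{eq_tra} to replace $\nabla_j\Delta f=-\nabla_j R$ and collecting terms then produces $\tfrac12\nabla_j R=R_{jt}f_t$, which is precisely \eqref{eq_sch}.

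Finally, for \eqref{eq_hamide} I would show that the function $R+|\nabla f|^2-2\lambda f$ has vanishing gradient. Differentiating, $\nabla_i|\nabla f|^2=2f_{it}f_t$, and replacing $f_{it}$ by $\lambda g_{it}-R_{it}$ from \eqref{def_sol} gives $\nabla_i|\nabla f|^2=2\lambda f_i-2R_{it}f_t$. Adding $\nabla_i R$ and subtracting $2\lambda f_i$, the $2\lambda f_i$ terms cancel and the remaining $\nabla_i R-2R_{it}f_t$ vanishes by \eqref{eq_sch}. Hence the gradient of $R+|\nabla f|^2-2\lambda f$ is identically zero, and since $M$ is connected the quantity equals a constant $c$, establishing \eqref{eq_hamide}.

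The only genuinely delicate point is the commutation step in the proof of \eqref{eq_sch}: one must invoke the Ricci identity for the one-form $\nabla f$ and contract correctly, paying attention to the sign convention for the curvature tensor fixed in Section~\ref{sec_def}. Everything else is bookkeeping with traces and repeated use of the soliton equation.
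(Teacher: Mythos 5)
Your proof is correct. The paper itself states this lemma without proof (these are Hamilton's classical soliton identities, deferred to references such as \cite{emilanman}), and your derivation — tracing the soliton equation, then taking its divergence with the contracted second Bianchi identity and the Ricci commutation formula $\Delta\nabla_j f=\nabla_j\Delta f+R_{jt}f_t$, then showing $\nabla\big(R+|\nabla f|^2-2\lambda f\big)=0$ — is exactly the standard argument the paper implicitly relies on, including the correct handling of the sign convention.
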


The tensor $D$, introduced by H.-D. Cao and Q. Chen  in \cite{caoche1}, turns out to be a fundamental tool in the study of the geometry of gradient Ricci solitons (more in general for gradient Einstein-type manifolds, see \cite{catmasmonrig2}). In components it is defined as
 \begin{align}\label{def_D}
   D_{ijk}=&\frac{1}{n-2}\pa{f_kR_{ij}-f_jR_{ik}}+\frac{1}{(n-1)(n-2)}f_t\pa{R_{tk}g_{ij}-R_{tj}g_{ik}}\\\nonumber
 &\,-\frac{R}{(n-1)(n-2)}\pa{f_k g_{ij}-f_j g_{ik}}.
 \end{align}
 The $D$ tensor is skew-symmetric in the second and third indices (i.e. $D_{ijk}=-D_{ikj}$) and totally trace-free (i.e. $D_{iik}=D_{iki}=D_{kii}=0$).
Note that our convention for the tensor $D$ differs from that in \cite{caoche1}.

\section{Integrability conditions for gradient Ricci solitons}
\label{sec_int}

In this short section we present the four integrability conditions for gradient Ricci solitons of dimension $n\geq 3$. 
\begin{proposition}\label{pro_int}
  If $\pa{M^n, g}$ is a gradient Ricci soliton with potential function $f$, then the Cotton tensor,  the Bach tensor and the tensor $D$ satisfy the following conditions
\begin{eqnarray}
\label{eq_1int} C_{ijk}+f_t W_{tijk} &=& D_{ijk}, \\ \label{eq_2int} (n-2)B_{ij} -\pa{\frac{n-3}{n-2}}f_tC_{jit} &=& D_{ijk, k},\\
  \label{eq_3int}R_{kt}C_{kti}&= &(n-2)D_{itk, tk},\\\label{eq_4int}
\frac{1}{2}|C|^2+R_{kt}C_{kti, i}& = &(n-2)D_{itk, tki}.
\end{eqnarray}

\end{proposition}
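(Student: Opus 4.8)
The plan is to prove the four identities \eqref{eq_1int}--\eqref{eq_4int} as a single chain, deriving each from its predecessor by taking one further divergence and simplifying with the soliton structure. For \eqref{eq_1int} I would differentiate the soliton equation \eqref{def_sol}: since $R_{ij,k}=-f_{ijk}$, the antisymmetrization $R_{ij,k}-R_{ik,j}$ equals $-\pa{f_{ijk}-f_{ikj}}$, and commuting the third covariant derivatives of $f$ turns this into a contraction of the Riemann tensor against $\nabla f$, namely $-R_{kjit}f_t$. I would then insert the Weyl decomposition \eqref{Weyl} of $R_{kjit}$, contract with $f_t$, and replace every $R_{tk}f_t$ by $\tfrac12 R_k$ using \eqref{eq_sch}. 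By the curvature symmetries the Weyl part reorganizes into $-f_tW_{tijk}$ (using $W_{kjit}=W_{tijk}$), while the remaining Ricci- and scalar-curvature terms, together with the explicit $-\tfrac{1}{2(n-1)}\pa{R_kg_{ij}-R_jg_{ik}}$ coming from the definition \eqref{def_cot} of the Cotton tensor, collapse exactly into $D_{ijk}$ as defined in \eqref{def_D}. The only delicate point here is the bookkeeping of the coefficients $\tfrac{1}{n-2}$ and $\tfrac{1}{2(n-1)}$, which recombine into $\tfrac{1}{2(n-1)(n-2)}$.

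To obtain \eqref{eq_2int} I would apply $\nabla_k$ to \eqref{eq_1int}. On the right we get $D_{ijk,k}$. On the left, $C_{ijk,k}$ is rewritten as $C_{jik,k}$ using the skew-symmetry and cyclic relations \eqref{CottonSym} together with the vanishing first divergence \eqref{eq_nulldivcotton}, and then expressed through the Bach tensor by \eqref{def_Bach_comp}. The term $\nabla_k(f_tW_{tijk})$ splits into $f_{tk}W_{tijk}$, which by \eqref{def_sol} and the total trace-freeness of $W$ equals $-R_{tk}W_{tijk}=R_{kl}W_{ikjl}$, and $f_tW_{tijk,k}$, which by the divergence formula \eqref{def_Cotton_comp_Weyl} produces $-\tfrac{n-3}{n-2}f_tC_{jit}$. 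The two copies of $R_{kl}W_{ikjl}$ cancel, leaving precisely \eqref{eq_2int}.

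For \eqref{eq_3int} I would take the divergence of \eqref{eq_2int} in the free index $j$ and contract. The Bach term becomes $(n-2)B_{ij,j}=\tfrac{n-4}{n-2}R_{kt}C_{kti}$ by \eqref{diverBach}, while $\nabla_j(f_tC_{jit})$ reduces to $R_{kt}C_{kti}$ after using \eqref{def_sol}, the trace-freeness \eqref{CottonTraces}, and \eqref{eq_nulldivcotton}; the coefficients then combine to give $-\tfrac{1}{n-2}R_{kt}C_{kti}=\nabla_t\nabla_k D_{itk}$. The step I expect to be the main obstacle is matching this against the stated right-hand side: differentiating \eqref{eq_2int} naturally produces $\nabla_t\nabla_k D_{itk}$, whereas \eqref{eq_3int} is written with the opposite order $\nabla_k\nabla_t D_{itk}=D_{itk,tk}$. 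These differ only by a sign, precisely because $D$ is skew-symmetric in its last two indices, so that $\nabla_t\nabla_k D_{itk}=-\nabla_k\nabla_t D_{itk}$; this sign flip is exactly what converts $-\tfrac{1}{n-2}R_{kt}C_{kti}$ into the clean form $R_{kt}C_{kti}=(n-2)D_{itk,tk}$.

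Finally, \eqref{eq_4int} follows by applying $\nabla_i$ to \eqref{eq_3int}. The right-hand side directly gives $(n-2)D_{itk,tki}$, and the Leibniz rule on the left produces $R_{kt}C_{kti,i}+R_{kt,i}C_{kti}$. To finish I would invoke the elementary identity $R_{ab,c}C_{abc}=\tfrac12\abs{C}^2$, which is immediate from the definition \eqref{def_cot}: contracting $C_{ijk}$ with $R_{ij,k}-R_{ik,j}$ and discarding the trace terms via \eqref{CottonTraces} gives $\abs{C}^2=2C_{ijk}R_{ij,k}$. Identifying $R_{kt,i}C_{kti}$ with $\tfrac12\abs{C}^2$ then yields \eqref{eq_4int} and closes the chain. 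The whole argument is thus a sequence of four divergences, the recurring difficulties being the index-symmetry manipulations of $W$ and $C$ and, most notably, the order-of-derivatives sign supplied by the skew-symmetry of $D$ in \eqref{eq_3int}.
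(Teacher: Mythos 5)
Your proposal is correct and follows essentially the same route as the paper: \eqref{eq_1int} from differentiating the soliton equation, commuting derivatives of $f$ and inserting the Weyl decomposition together with the definitions \eqref{def_cot} and \eqref{def_D}, then \eqref{eq_2int}--\eqref{eq_4int} by successive divergences using \eqref{def_Bach_comp}, \eqref{def_Cotton_comp_Weyl}, \eqref{diverBach}, the trace-freeness and skew-symmetries of $C$ and $D$, and the identity $R_{kt,i}C_{kti}=\tfrac12\abs{C}^2$. All the key simplifications you flag (the cancellation of the $R_{kl}W_{ikjl}$ terms, the sign from the skew-symmetry of $D$, and the contraction identity for $\abs{C}^2$) match the paper's own steps.
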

\begin{proof} For the sake of completeness we will prove all the identities. First we prove \eqref{eq_1int} and \eqref{eq_2int}, which were first obtained in \cite{caoche2}. We start taking the covariant derivative of the soliton equation \eqref{def_sol}, which gives 
\begin{equation*}
  R_{ij, k}+ f_{ijk} =0.
\end{equation*}
Skew-symmetrizing, recalling the commutation relation $f_{ijk}-f_{ikj}=f_tR_{tijk}$ and using \eqref{Weyl} we deduce
\begin{eqnarray}\label{CI_A}
  R_{ij, k}-R_{ik, j} &=& f_tR_{tikj} \\ \nonumber
   &=& f_t\Big\{W_{ijkj} +\frac{1}{n-2}(R_{tk}g_{ij}-R_{tj}g_{ik}+R_{ij}g_{tk}-R_{ik}g_{tj})\\\nonumber
  &&\,-\frac{R}{(n-1)(n-2)}(g_{tk}g_{ij}-g_{tj}g_{ik}) \Big\}
 \\ \nonumber &=&-f_tW_{tijk} +\frac{1}{n-2}\pa{f_tR_{tk}g_{ij}-f_tR_{tj}g_{ik}+f_kR_{ij}-f_jR_{ik}}\\ \nonumber
&&\,-\frac{R}{(n-1)(n-2)}\pa{f_{k}g_{ij}-f_{j}g_{ik}}.
\end{eqnarray}
Now we insert in the previous equation the definitions of the Cotton tensor \eqref{def_cot} and of the tensor $D$, deducing \eqref{eq_1int}.

In order to prove \eqref{eq_2int} we take the divergence of \eqref{eq_1int}:
\begin{equation}\label{CI_B}
  C_{ijk, k}+f_{tk}W_{tijk}+f_tW_{tijk, k}= D_{ijk, k}.
\end{equation}
Inserting in \eqref{CI_B} the definition of the Bach tensor \eqref{def_Bach_comp}, the soliton equation \eqref{def_sol} and \eqref{def_Cotton_comp_Weyl}, and exploiting the fact that the Weyl tensor is totally trace free, equation \eqref{eq_2int} follows immediately.

In order to show \eqref{eq_3int} we take the covariant derivative of equation \eqref{eq_2int}, obtaining
  \[
  (n-2)B_{ij, k} -\pa{\frac{n-3}{n-2}}\pa{f_{tk}C_{jit}+f_tC_{jit, k}}= D_{ijt, tk}  ,
  \]
  which implies, using the soliton equation and the fact that $C_{jit}=-C_{jti}$,
  \[
    (n-2)B_{ij, k} -\pa{\frac{n-3}{n-2}}\pa{\lambda C_{jik}+ R_{tk}C_{jti} + f_tC_{jit, k}}= D_{ijt, tk} .
  \]
  Tracing with respect to $j$ and $k$, using equation \eqref{eq_nulldivcotton} and the fact that the Cotton tensor is totally trace-free we get
  \[
   (n-2)B_{ik, k} - \pa{\frac{n-3}{n-2}}R_{tk}C_{kti}= D_{ikt, tk} .
  \]
  Now we exploit \eqref{diverBach} and $D_{ijk}=-D_{ikj}$ in the previous relation, obtaining \eqref{eq_3int}.

Equation \eqref{eq_4int} follows by taking the divergence of \eqref{eq_3int},
\[
R_{kt, i}C_{kti}+R_{kt}C_{kti, i} = (n-2)D_{itk, tki}.
\]
Now we use the symmetry of the Cotton tensor, obtaining
\[
\frac{1}{2}\pa{R_{kt, i}-R_{ki, t}}C_{kti}+R_{kt}C_{kti, i}= (n-2)D_{itk, tki},
\]
from which we immediately deduce \eqref{eq_4int}.
\end{proof}

\section{A key integral formula}

In this section we show an integral formula that holds on every gradient Ricci soliton.
\begin{theorem}\label{thm_intide} Let $(M^n,g)$, $n\geq 3$, be a gradient Ricci soliton with potential function $f$. For every $\psi:\mathbb{R}\to\mathbb{R}$, $C^2$ function with $\psi(f)$ having compact support in $M$, one has
$$
\frac{1}{2}\int_M |C|^2 \,\psi(f) \, dV_g \,=\, - \int_M C_{kti,it} \, f_k \,\psi(f)\,dV_g \,.
$$
In particular, if $n\geq 4$, this is equivalent to
$$
\frac{1}{2}\int_M |C|^2 \,\psi(f) \, dV_g \,=\, \frac{n-2}{n-3} \int_M W_{lkti,lit} \, f_k \,\psi(f)\,dV_g \,.
$$
\end{theorem}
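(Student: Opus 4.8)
The plan is to integrate the fourth integrability condition \eqref{eq_4int} against the weight $\psi(f)$ and then transfer every derivative onto the smooth factor by integration by parts; this is legitimate with no boundary terms because $\psi(f)$ is compactly supported. Integrating \eqref{eq_4int} gives
$$\frac12\int_M |C|^2\,\psi(f)\,dV_g + \int_M R_{kt}\,C_{kti,i}\,\psi(f)\,dV_g = (n-2)\int_M D_{itk,tki}\,\psi(f)\,dV_g.$$
On the right-hand side I would integrate by parts once in the outermost index $i$ and invoke \eqref{eq_3int}, in the form $(n-2)D_{itk,tk}=R_{kt}C_{kti}$, to reduce the $D$-term to $-\int_M R_{kt}C_{kti}\,f_i\,\psi'(f)\,dV_g$.

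For the middle term I use the soliton equation \eqref{def_sol} to write $R_{kt}=\lambda g_{kt}-f_{kt}$; the trace part drops since $g_{kt}C_{kti}=0$ by \eqref{CottonTraces}, leaving $R_{kt}C_{kti,i}=-f_{kt}C_{kti,i}$. Writing $f_{kt}=\nabla_t f_k$ and integrating by parts in $t$ produces exactly the desired term and a $\psi'$-remainder:
$$\int_M R_{kt}\,C_{kti,i}\,\psi(f)\,dV_g = \int_M C_{kti,it}\,f_k\,\psi(f)\,dV_g + \int_M C_{kti,i}\,f_k f_t\,\psi'(f)\,dV_g.$$
Collecting everything, the first claimed identity is equivalent to the vanishing of the sum of the three $\psi'$-terms, that is
$$\int_M\big(C_{kti,i}\,f_k f_t + R_{kt}\,C_{kti}\,f_i\big)\,\psi'(f)\,dV_g = 0.$$

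This last cancellation is the heart of the matter and the step I expect to be the main obstacle, since it is not pointwise but genuinely algebraic. I would integrate the first summand by parts once more in $i$; the $\psi''$ contribution dies because $C_{kti}$ is skew in $(t,i)$ while $f_t f_i$ is symmetric, and the piece containing $C_{kti}f_{it}$ vanishes for the same reason. Using $R_{kt}=\lambda g_{kt}-f_{kt}$ again on the second summand, one is left with precisely two contractions of $C$ against $f_{\,\cdot\,\cdot}f_{\,\cdot}$. Denoting the three possible such contractions by $X_1=C_{abc}f_{ab}f_c$, $X_2=C_{abc}f_{ac}f_b$ and $X_3=C_{abc}f_{bc}f_a$, the skew-symmetry of $C$ forces $X_3=0$, while contracting the cyclic identity in \eqref{CottonSym} with $f_{ab}f_c$ gives $X_1+X_2+X_3=0$; hence $X_1+X_2=0$ and the two surviving terms cancel, establishing the first formula.

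Finally, for $n\geq4$ the second formula follows at once by substituting the Weyl expression for the Cotton tensor from \eqref{def_Cotton_comp_Weyl}, namely $C_{kti}=-\tfrac{n-2}{n-3}W_{lkti,l}$, so that $C_{kti,it}=-\tfrac{n-2}{n-3}W_{lkti,lit}$ and the sign produces the stated factor $\tfrac{n-2}{n-3}$. The only delicate bookkeeping in the whole argument is keeping the index order straight so that the integration by parts in $t$ yields $C_{kti,it}f_k$ rather than a differently contracted third derivative; everything else is routine once the cyclic-identity cancellation $X_1+X_2=0$ is in hand.
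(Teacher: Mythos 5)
Your argument is correct, and it shares the paper's skeleton: multiply \eqref{eq_4int} by $\psi(f)$, integrate, handle the $D$-term with one integration by parts plus \eqref{eq_3int}, and use the soliton equation together with the trace-free property of $C$ on the term $R_{kt}C_{kti,i}$. Where you genuinely diverge is in the finishing mechanism. The paper never isolates the $\psi'$-weighted terms: it keeps $-\int_M C_{kti,i}f_{kt}\,\psi(f)$ on the left-hand side, converts $\int_M C_{kti}f_{kt}f_i\,\psi'(f)$ back into $\psi$-weighted terms through two further integrations by parts and an index renaming (using only the skew-symmetry $C_{kti}=-C_{kit}$), and concludes by cancelling the common term $-\int_M C_{kti,i}f_{kt}\,\psi(f)$ from both sides. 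You instead collect all $\psi'$-terms on one side and, after one integration by parts, kill the resulting integrand \emph{pointwise} via the identity $C_{abc}f_{ab}f_c+C_{abc}f_{ac}f_b=0$, which you obtain by contracting the cyclic symmetry in \eqref{CottonSym} with $f_{ab}f_c$ and using the skew-symmetry of $C$ against the symmetric Hessian --- an ingredient the paper's proof does not use at all. Your route is arguably cleaner, since the crucial cancellation becomes a purely algebraic, pointwise fact rather than being spread across several integrations by parts with cross-cancellation; the paper's route buys a proof relying only on the skew-symmetry in the last two indices of $C$. Two minor remarks: you announce ``the sum of the three $\psi'$-terms'' while your displayed equation correctly contains only two (this is a slip of wording, not of substance), and your deduction of the second formula from \eqref{def_Cotton_comp_Weyl}, including the sign bookkeeping that yields the factor $\tfrac{n-2}{n-3}$, is correct.
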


\begin{proof} Let $\psi$ satisfies the hypotheses. We multiply equation \eqref{eq_4int} by $\psi(f)$ and integrate over $M$. By using the soliton equation, the fact that $C$ is totally trace-free and integrating by parts, we obtain
\begin{eqnarray*}
\frac{1}{2}\int_M |C|^2 \,\psi(f) -\int_M C_{kti,i}\,f_{kt} \psi(f) &=& (n-2) \int_M D_{itk,tki} \,\psi(f) \\
&=& -(n-2)\int_M D_{itk,tk} \,f_i\,\psi'(f) \\
&=& -\int_M C_{kti} \,R_{kt}\,f_i \,\psi'(f) \\
&=& \int_M C_{kti} \,f_{kt}\,f_i \,\psi'(f) ,
\end{eqnarray*}
where we have exploited also equation \eqref{eq_3int}. Another integration by parts and the fact that $C_{kti}=-C_{kit}$ yield
\begin{eqnarray*}
\frac{1}{2}\int_M |C|^2 \,\psi(f) -\int_M C_{kti,i}\,f_{kt} \psi(f) &=& -\int_M C_{kti,t} \,f_{k}\,f_i \,\psi'(f) -\int_M C_{kti} \,f_{k}\,f_{it} \,\psi'(f)\\
&& -\int_M C_{kti} \,f_{k}\,f_i \,f_t\,\psi''(f) \\
&=& -\int_M C_{kti,t} \,f_{k}\,f_i \,\psi'(f).
\end{eqnarray*}
Hence, renaming indeces,
\begin{eqnarray*}
\frac{1}{2}\int_M |C|^2 \,\psi(f) -\int_M C_{kti,i}\,f_{kt} \psi(f) &=& -\int_M C_{kit,i} \,f_{k}\,f_t \,\psi'(f) \\
&=& \int_M C_{kti,i} \,f_{k}\,\big(\psi(f)\big)_t \\
&=& -\int_M C_{kti,it} \,f_{k}\,\psi(f) -\int_M C_{kti,i} \,f_{kt}\,\psi(f)\,.
\end{eqnarray*}
Simplifying we obtain the result. The second equation in the statement follows from \eqref{def_Cotton_comp_Weyl}.

\end{proof}

\begin{rem}
In case $n=3$ the formula of Theorem \ref{thm_intide} trivially holds. Indeed it is easy to see, using formulas \eqref{eq_1int} and \eqref{eq_3int}, that $$C_{ijk}=D_{ijk},\quad\quad\quad R_{kt}C_{kti}=C_{itk,tk},$$ hence, from the symmetries of $C$, one has
$$\frac{1}{2}|C|^2=\frac{1}{2}C_{kti}D_{kti}=\frac{1}{2}C_{kti}(R_{kt}f_i-R_{ik}f_{t})=C_{kti}R_{kt}f_i=C_{itk,tk}f_i=-C_{kti,it}f_k,$$ i.e. point wise on $M^3$
\begin{equation}\label{eq_poiide}
\frac{1}{2}|C|^2=-C_{kti,it}f_k.
\end{equation}
\end{rem}

\

\section{Proof of the results}

\subsection{Shrinking Ricci solitons}

In this section we prove Theorem \ref{thm_shr}. Let $(M^n,g)$, $n\geq4$, be a complete gradient shrinking Ricci soliton with potential function $f$.

If $M$ is compact, then we choose $\psi(f)\equiv1$ on $M$ in Theorem \ref{thm_intide}. Thus, integrating by parts, we have
$$\frac{1}{2}\int_M|C|^2=-\int_MC_{kti,it}f_k=\int_MC_{kti,itk}f.$$
By formula \eqref{def_Cotton_comp_Weyl} one has
\begin{equation}\label{eq127}
C_{kti,itk}=-\frac{n-2}{n-3}W_{jkti,jitk}=-\frac{n-2}{n-3}\operatorname{div}^4(W)\equiv0
\end{equation}
on $M$, by assumption. Hence, we conclude that $C\equiv0$ on $M$. Theorem \ref{thm_shr} now follows from \cite{fergar,munses}.

If $M$ is complete and noncompact, then we choose $\psi(f)=e^{-f}\phi(f)$, where, for any fixed $s>0$, $\phi:\RR\rightarrow\RR$ is a nonnegative $C^3$ function such that $\phi\equiv 1$ on $[0,s]$, $\phi\equiv 0$ on $[2s,+\infty)$ and $\phi'\leq 0$ on $[s,2s]$. It is well known that on every complete, noncompact gradient shrinking soliton the potential function $f$ is proper with quadratic growth at infinity (see \cite{caozho}). Then, for every $s>0$, the cutoff function $\psi(f)$ has compact support in $M$. Thus, integrating by parts in the integral formula of Theorem \ref{thm_intide} we obtain
\begin{eqnarray}\nonumber
\frac{1}{2}\int_M|C|^2 e^{-f}\phi(f)&=& -\int_M C_{kti,it}\,f_k\, e^{-f}\phi(f) \\\label{eq233}
&=& \int_M C_{kti,it}\big(e^{-f}\big)_k \,\phi(f)\\\nonumber
&=& -\int_M C_{kti,itk}\,e^{-f}\,\phi(f) -\int_M C_{kti,it}\,f_k\,e^{-f} \,\phi'(f).
\end{eqnarray}
Now, since the function $\widetilde{\psi}=e^{-f} \phi'(f)$ is $C^2$ with compact support, we can apply again the integral formula of Theorem \ref{thm_intide} with $\widetilde{\psi}$, obtaining
$$
-\int_M C_{kti,it}\,f_k\,e^{-f} \,\phi'(f) = -\int_M C_{kti,it}\,f_k\,\widetilde{\psi} = \frac{1}{2}\int_M |C|^2 \widetilde{\psi} = \frac{1}{2}\int_M |C|^2 e^{-f} \,\phi'(f) \leq 0,
$$
since $\phi'\leq 0$. Equation \eqref{eq233} yields
$$
\frac{1}{2}\int_M|C|^2 e^{-f}\phi(f) \leq -\int_M C_{kti,itk}\,e^{-f}\,\phi(f) = 0
$$
by assumption and equation \eqref{eq127}. Hence, $C\equiv 0$ on the compact set $\Omega_s=\{f\leq s\}$, since $\phi(f)\geq 0$ on $M$ and $\phi(f)\equiv 1$ on $\Omega_s$. Then $C\equiv 0$ on $M$, by taking the limit as $s \to +\infty$, and the conclusion follows again from \cite{fergar, munses}.

\subsection{Steady and expanding Ricci solitons in dimension greater than four} In this section we prove Theorem \ref{thm_ste} and Theorem \ref{thm_exp}. Let $(M^n,g)$, $n\geq4$, be a complete gradient steady or expanding Ricci soliton with potential function $f$.

It is well known that, if $M$ is compact, then $(M^n,g)$ is Einstein. In particular $(M^n,g)$ has harmonic Weyl curvature.

On the other hand, if $M$ is complete and noncompact, then we proceed similarly as in the shrinking case. We let $\psi(f)=e^{f}\phi(-f)$, where, for any fixed $s>0$, $\phi:\RR\rightarrow\RR$ is a nonnegative $C^3$ function such that $\phi\equiv 1$ on $[0,s]$, $\phi\equiv 0$ on $[2s,+\infty)$ and $\phi'\leq 0$ on $[s,2s]$. The assumptions of Theorem \ref{thm_ste} and Theorem \ref{thm_exp} imply that $-f$ is proper, with linear or quadratic growth in the steady or expanding case respectively (see for instance \cite{caoche1} and \cite{caocatchemanmaz}). Then, for every $s>0$, the cutoff function $\psi(f)$ has compact support in $M$. Integrating by parts in the integral formula of Theorem \ref{thm_intide} we obtain
\begin{eqnarray}\nonumber
\frac{1}{2}\int_M|C|^2 e^{f}\phi(-f)&=& -\int_M C_{kti,it}\,f_k\, e^{f}\phi(-f) \\\nonumber
&=& -\int_M C_{kti,it}\big(e^{f}\big)_k \,\phi(-f)\\\label{eq23}
&=& \int_M C_{kti,itk}\,e^{f}\,\phi(-f) -\int_M C_{kti,it}\,f_k\,e^{f} \,\phi'(-f).
\end{eqnarray}
Now, since the function $\widetilde{\psi}=e^{f} \phi'(-f)$ is $C^2$ with compact support, applying the integral formula of Theorem \ref{thm_intide} with $\widetilde{\psi}$, we get
$$
-\int_M C_{kti,it}\,f_k\,e^{f} \,\phi'(-f) = -\int_M C_{kti,it}\,f_k\,\widetilde{\psi} = \frac{1}{2}\int_M |C|^2 \widetilde{\psi} = \frac{1}{2}\int_M |C|^2 e^{f} \,\phi'(-f) \leq 0,
$$
since $\phi'\leq 0$. Equation \eqref{eq23} yields
\begin{equation}\label{eq128}
\frac{1}{2}\int_M|C|^2 e^{f}\phi(-f) \leq \int_M C_{kti,itk}\,e^{f}\,\phi(-f) = 0
\end{equation}
by assumption and equation \eqref{eq127}. Hence, arguing as before, $C\equiv 0$ on $M$, i.e. $(M,g)$ has harmonic Weyl curvature.

\subsection{Steady and expanding Ricci solitons in dimension three} In this section we prove Theorem \ref{thm_ste3d} and Theorem \ref{thm_exp3d}. First of all, in the expanding case, arguing exactly as before, we obtain equation \eqref{eq128}. This again implies $C\equiv 0$ on $M$, i.e. $(M,g)$ is locally conformally flat. The rotational symmetry now follows from \cite{caoche1}.

Finally, let $(M^3,g)$ be a three dimensional complete gradient steady Ricci soliton. By B.-L. Chen \cite{che} we have that $g$ must have nonnegative sectional curvature. By Hamilton's identity \eqref{eq_hamide}
$$R+\abs{\nabla f}^2=c,$$ for some $c\in\RR$, and thus $g$ has bounded curvature. From Hamilton's strong maximum principle (see e.g. \cite{choluni}) we deduce that $\pa{M^3, g}$ is:
\begin{itemize}
\item[(i)] flat, or
\item[(ii)] it has strictly positive sectional curvature, or
\item[(iii)] it splits as a product $\Sigma^2\times\erre$, where $\Sigma^2$ is the cigar steady soliton.
\end{itemize}
In case (i) the proof is complete. If (ii) holds, by \cite[Proposition 2.3]{denzhu} we know that $f$ has a unique critical point. Thus, by Hamilton's identity \eqref{eq_hamide}, the scalar curvature attains its maximum. In particular, from \cite{caoche1}, $-f$ is proper and has linear growth at infinity. By the same argument we used before, we conclude that $C\equiv 0$ on $M$, i.e. $(M,g)$ is locally conformally flat and the result follows from \cite{caoche1}.

Finally we now show that case (iii) cannot occur, by proving that the steady soliton $\Sigma^2\times\erre$ does not satisfy
$$
\operatorname{div}^3(C)\equiv 0.
$$
We recall that Hamilton's cigar steady soliton is defined as the complete Riemannian surface $(\Sigma^2,\tilde{g})$, where $\Sigma^2=\RR^2$,
$$
\tilde{g} = \frac{dx^2+dy^2}{1+x^2+y^2}
$$
and with potential function
$$
\tilde{f}(x,y) = -\log(1+x^2+y^2).
$$
On $(\Sigma^2\times\erre,g)$ we adopt global coordinates $s,x,y$ and hence the metric and the potential take the form
$$
g=ds^2+\frac{dx^2+dy^2}{1+x^2+y^2}
$$
and
$$
f(s,x,y) = -\log(1+x^2+y^2).
$$
In particular, the Ricci tensor is diagonal with
$$
R_{xx}=R_{yy}=\frac{1}{2}R>0, \quad\quad R_{ss}=0.
$$
Moreover, Hamilton's identity \eqref{eq_hamide} implies that at the origin $O=(0,0,0)$ one has
$$
\nabla f(O) = \nabla R(O) = 0.
$$
Now the conclusion of Theorem \ref{thm_ste3d} is a consequence of the following lemma.
\begin{lemma}\label{lem_sig} Using the above notation, one has
$$
\operatorname{div}^3(C) (O) = \frac{1}{8}R(O)^3 \neq 0.
$$
\end{lemma}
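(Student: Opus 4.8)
The plan is to reduce the computation to an intrinsic calculation on the Hamilton cigar surface $(\Sigma^2,\tilde g)$ and then evaluate at $O$. Since $(\Sigma^2\times\erre,g)$ is a Riemannian product with a flat line factor, the Christoffel symbols never mix the $s$-direction with the surface directions, and both $R$ and $f$ depend only on $(x,y)$. First I would record that the only nonzero components of the Cotton tensor are those carrying three surface indices: using \eqref{def_cot} with $n=3$ together with $R_{ss}=R_{sa}=0$, $\nabla_s(\cdot)=0$, and the two-dimensional identity $R_{ab}=Kg_{ab}$ (so $R=2K$, with $K$ the Gauss curvature of $\Sigma^2$), every component of $C$ having an index equal to $s$ vanishes, while for surface indices $a,b,c$ one finds
\[
C_{abc}=\tfrac12\big((\nabla_c K)\,g_{ab}-(\nabla_b K)\,g_{ac}\big).
\]
Because $C$ is supported on the surface factor and is $s$-independent, the three-dimensional triple divergence collapses onto the intrinsic triple divergence of $\Sigma^2$.

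Next I would carry out the three successive divergences on $\Sigma^2$, contracting the third, then the second, then the first index, as in $\operatorname{div}^3(C)=\nabla_i\nabla_j\nabla_k C_{ijk}$ (this sequential reading is legitimate since $\nabla$ commutes with metric contractions). Dividing on the last index gives the symmetric two-tensor
\[
T_{ab}:=\nabla^c C_{abc}=\tfrac12\big(\Delta K\, g_{ab}-\nabla_a\nabla_b K\big).
\]
A second divergence requires commuting covariant derivatives: writing $\nabla^b\nabla_a\nabla_b K=\Delta(\nabla_a K)$ and invoking the Bochner identity $\Delta(\nabla_a K)=\nabla_a\Delta K+R_{ab}\nabla^b K$ with $R_{ab}=Kg_{ab}$, the scalar Laplacian terms cancel and I obtain $V_a:=\nabla^b T_{ab}=-\tfrac12 K\,\nabla_a K$. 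A third divergence then yields
\[
\operatorname{div}^3(C)=\nabla^a V_a=-\tfrac12\big(|\nabla K|^2+K\,\Delta K\big)=-\tfrac18\big(|\nabla R|^2+R\,\Delta R\big),
\]
after substituting $K=\tfrac12 R$.

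To conclude I would evaluate at $O$. Since $\nabla f(O)=\nabla R(O)=0$, the gradient term drops and it remains to compute $\Delta R(O)$. This can be done directly from $R=\frac{4}{1+x^2+y^2}$ and $\Delta_{\tilde g}=(1+x^2+y^2)\Delta_0$, or more cleanly from the soliton identities: \eqref{eq_tra} with $\lambda=0$ gives $\Delta f=-R$, while \eqref{eq_sch} together with $R_{ab}=\tfrac12 Rg_{ab}$ gives $\nabla R=R\nabla f$; hence $\Delta R=\operatorname{div}(R\nabla f)=R|\nabla f|^2-R^2$, so that $\Delta R(O)=-R(O)^2$. Substituting back, $\operatorname{div}^3(C)(O)=-\tfrac18 R(O)\Delta R(O)=\tfrac18 R(O)^3\neq 0$, as claimed.

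The main obstacle is bookkeeping rather than conceptual: one must track the sign and the curvature correction in the commutation step $\nabla^b\nabla_a\nabla_b K=\nabla_a\Delta K+Kg_{ab}\nabla^b K$ consistently with the curvature convention fixed in Section \ref{sec_def}, and must justify carefully that, for covariant (and not merely partial) derivatives, the product structure really lets the entire computation reduce to the surface factor with no surviving $s$-contributions.
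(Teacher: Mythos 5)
Your computation is correct and reaches the paper's exact conclusion, but by a genuinely different route. The paper never computes the Cotton tensor of $\Sigma^2\times\erre$ directly: it recycles the integrability conditions of Proposition \ref{pro_int}, writing $\operatorname{div}^3(C)=-\tfrac12|D|^2-R_{ij}D_{ijk,k}$ via \eqref{eq_1int} and \eqref{eq_4int}, noting $D(O)=0$ because $\nabla f(O)=0$, and then evaluating $D_{ijk,k}$ at $O$ from the definition of $D$ together with the soliton equations and the algebraic structure of the cigar's Ricci tensor ($|\ricc|^2=\tfrac12 R^2$, $R_{ij}R_{jk}R_{ik}=\tfrac14R^3$). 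You instead exploit the product structure to reduce everything to intrinsic two-dimensional calculus on the cigar, compute $C_{abc}=\tfrac12\big((\nabla_cK)g_{ab}-(\nabla_bK)g_{ac}\big)$, and carry out the three divergences by hand, using the Bochner commutation $\Delta\nabla_aK=\nabla_a\Delta K+R_{ab}\nabla^bK$ (whose sign I checked is consistent with the paper's curvature convention, under which $f_{ijk}-f_{ikj}=f_tR_{tijk}$ and contraction of the second and fourth slots gives Ricci). The trade-off: the paper's argument is shorter because it only needs information at the single point $O$ and reuses already-proven machinery, whereas your argument is self-contained (it never invokes Proposition \ref{pro_int}) and yields the stronger pointwise identity
\begin{equation*}
\operatorname{div}^3(C)\,=\,-\tfrac18\big(|\nabla R|^2+R\,\Delta R\big)\,=\,\tfrac18R^3-\tfrac14R^2|\nabla f|^2
\end{equation*}
valid everywhere on $\Sigma^2\times\erre$, not just at the origin, which makes the failure of $\operatorname{div}^3(C)\equiv0$ more transparent. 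The two points you flag as delicate are indeed the right ones to check, and both go through: all Christoffel symbols mixing the flat factor vanish and every tensor in sight is $s$-independent with vanishing $s$-components (so each successive divergence stays supported on the surface factor), and the commutation sign is the one for which the round sphere has positive Ricci, matching Section \ref{sec_def}.
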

\begin{proof}
From equations \eqref{eq_1int} and \eqref{eq_4int} $C=D$ and
\begin{equation}\label{eq_189}
C_{kti,itk} = D_{kti,itk} = - D_{kit,itk} = -\frac{1}{2}|D|^2 - R_{ti}D_{tik,k} = -\frac{1}{2}|D|^2 - R_{ij}D_{ijk,k}
\end{equation}
Since $\nabla f(O)=0$, from the definition of $D$
$$
D_{ijk}=(f_kR_{ij}-f_jR_{ik})+\frac{1}{2}f_t(R_{tk}g_{ij}-R_{tj}g_{ik})-\frac{1}{2}R(f_k g_{ij}-f_j g_{ik}),
$$
we have $D(O)=0$. In order to compute at the origin $O$ the last term in \eqref{eq_189}, we take the divergence of $D$ and we obtain
\begin{eqnarray*}
D_{ijk,k} &=& \Delta f R_{ij} + f_k R_{ij,k} - f_{jk}R_{ik}- f_j R_{ik,k} + \frac{1}{2}\big(f_{tk} R_{tk}g_{ij} + f_t R_{tk,k}g_{ij}\\
&&-f_{ti}R_{tj}-f_t R_{tj,i}-f_k R_k g_{ij}-R\Delta f g_{ij} + R_i f_j + R f_{ij} \big).
\end{eqnarray*}
Computing at the origin $O$, we obtain
$$
D_{ijk,k} = \Delta f R_{ij} - f_{jk}R_{ik} + \frac{1}{2}\big(f_{tk} R_{tk}g_{ij} -f_{ti}R_{tj}-R\Delta f g_{ij} + R f_{ij} \big).
$$
Using the soliton equations \eqref{def_sol} and \eqref{eq_tra}, one has at the origin
$$
D_{ijk,k} = -\frac{3}{2}RR_{ij} + \frac{3}{2}R_{jk}R_{ik} - \frac{1}{2}|\ricc|^2 g_{ij} + \frac{1}{2}R^2 g_{ij}.
$$
Tracing with the Ricci tensor, we get at the origin
$$
R_{ij} D_{ijk,k} = -2R|\ricc|^2 + \frac{3}{2}R_{ij}R_{jk}R_{ik} + \frac{1}{2}R^3
$$
Now,
$$
|\ricc|^2 = \frac{1}{2}R^2, \quad\quad R_{ij}R_{jk}R_{ik}=\frac{1}{4}R^3,
$$
and we obtain
$$
R_{ij} D_{ijk,k} = -\frac{1}{8}R^3.
$$
From \eqref{eq_189} the conclusion follows.
\end{proof}

\

\bibliographystyle{abbrv}

\bibliography{biblio_vartadiv}
\end{document}